\documentclass{amsart}
\sloppy

\newtheorem{thr}{Theorem}[section]
\newtheorem{lem}[thr]{Lemma}
\newtheorem{cor}[thr]{Corollary}

\theoremstyle{definition}
\newtheorem{defn}[thr]{Definition}

\theoremstyle{remark}

\numberwithin{equation}{section}

\def\varkappa{\kappa}
\def\Ro{\overline{\mathbb{R}}}
\def\R{\mathbb{R}}

\def\P{{\mathcal P}}
\def\G{{\mathcal G}}
\def\S{{\mathcal S}}
\def\op{\oplus}
\def\ot{\otimes}
\def\R{\mathbb{R}}



\begin{document}

\title{Tropical matrices and group representations}

\author{Yaroslav Shitov}
\address{Moscow State University, Leninskie Gory, 119991, GSP-1, Moscow, Russia}
\email{yaroslav-shitov@yandex.ru}


\begin{abstract}
The paper gives a complete description of the subgroups of the semigroup of tropical $n$-by-$n$ matrices
up to an isomorphism. In particular, we show that every of these groups has a torsion-free
abelian subgroup of index at most $n!$, proving the conjecture of Johnson and Kambites.
\end{abstract}

\maketitle

\section{Introduction}

The set $\R$ of reals extended by adding an infinite negative element $-\infty$ is called
the \textit{tropical semiring} and is also known as the \textit{max-plus algebra}.
The tropical arithmetic operations on $\Ro=\R\cup\{-\infty\}$
are $a\op b=\max\{a,b\}$ and $a\ot b=a+b$. The main object of our research is the set
$\Ro^{n\times n}$ of tropical $n$-by-$n$ matrices. We are interested in studying
the multiplicative structure of tropical matrices. The multiplication of such matrices
is defined as ordinary matrix multiplication with $+$ and $\cdot$ replaced by the tropical
operations $\op$ and $\ot$.

The study of linear algebra over the tropical semiring is important for many
different applications (see~\cite{AGG, HOW}). There is a number of purely linear-algebraic
important problems for tropical matrices, for example, the eigenvalues and eigenvectors
problem, the problem of solving linear systems, computational problems for the rank functions,
see~\cite{DSS, HOW}. Another important approach considers the set of tropical
matrices from the point of view of the semigroup theory. The paper~\cite{Gau} is
devoted to the solution of the Burnside-type problem for semigroups of tropical matrices.
Johnson and Kambites in the recent paper~\cite{JK2} have developed the study of the
semigroup-theoretic structure of tropical matrices. They consider Green's relations
on the semigroup $\left(\Ro^{n\times n},\ot\right)$, groups of tropical matrices,
and idempotent tropical matrices. They give a complete description of the subgroups of
$\left(\Ro^{n\times n},\ot\right)$ in the case when $n=2$. The study of Green's relations
on the semigroup of tropical matrices has been developed in~\cite{HK, JK}. In~\cite{HK},
the complete description of the $\mathcal{D}$-relation has been provided. In~\cite{JK},
the important characterization of the $\mathcal{J}$-order has been given, and the connection
of Green's relations with the rank functions of tropical matrices has been studied.

The aim of our paper is to solve the problem that has arisen from the paper~\cite{JK2}.
Namely, we are interested in a complete characterization of the subgroups of the
semigroup $\left(\Ro^{n\times n},\ot\right)$. We show that every subgroup of the
semigroup of tropical $n$-by-$n$ matrices admits a faithful representation with
tropical monomial $n$-by-$n$ matrices. We prove that every subgroup $\G$ of
$\left(\Ro^{n\times n},\ot\right)$ is isomorphic to a subgroup of the wreath product
$\R\wr\S_n$, and, conversely, every subgroup of $\R\wr\S_n$ can be realized with
tropical $n$-by-$n$ matrices. Our results confirm the conjecture proposed
in~\cite{JK2} that every group admitting a faithful representation by $n\times n$
tropical matrices must have a torsion-free abelian subgroup of index at most $n!$.
We also give an upper bound for the order of a periodic group of tropical
$n$-by-$n$ matrices, developing the result proven in~\cite{dAP}.

Throughout our paper $\S_n$ will denote the symmetric group on $\{1,\ldots,n\}$.
By $a_{i(\cdot)}$ we denote the $i$th row of a matrix $A$, by $A[r_1,\ldots,r_k]$ the submatrix of $A$
formed by the rows with numbers $r_1,\ldots,r_k$.
We say that a matrix $P\in\Ro^{n\times n}$ is \textit{monomial} if there
exists $\sigma=\sigma(P)\in\S_n$ such that $p_{ij}\neq-\infty$ if and only if $i=\sigma(j)$.
In this case, $P$ is called \textit{diagonal} if $\sigma(P)$ is the identity.
Note that the diagonal matrix with zeros on the diagonal is the neutral element of
the semigroup $\left(\Ro^{n\times n},\ot\right)$.

\section{Subgroups of the semigroup $\left(\Ro^{n\times n},\ot\right)$}

We need the concept of the row rank (see~\cite{AGG}) of a tropical matrix for our considerations.

\begin{defn}\label{rrank}
A tropical matrix $B\in\Ro^{n\times m}$ is said to be of \textit{full row rank} if no row
of $A$ can be expressed as a linear combination of other rows, that is, the condition
\begin{equation}\label{eq_1_of_it}
b_{i(\cdot)}=\bigoplus_{k\in\{1,\ldots,n\}\setminus\{i\}} \lambda_k\ot b_{k(\cdot)}
\end{equation}
fails to hold for every $i\in\{1,\ldots,n\}$ and $\lambda_1,\ldots,\lambda_n\in\Ro$.
\end{defn}

We also need the following lemma.

\begin{lem}\label{lemma_1_of_it}
Let $A, B, C, D\in\Ro^{n\times n}$ be such that $B=C\ot A$, $A=D\ot B$.
If the row rank of $B$ is full, then there exists a monomial matrix $P\in\Ro^{n\times n}$
such that $B=P\ot A$.
\end{lem}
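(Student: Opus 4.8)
The plan is to use the two given relations to pass to the endomorphism $M=C\ot D$ of $B$, extract from the full-row-rank hypothesis that $M$ has zeros on its main diagonal, and then run a short ``sandwich'' estimate that collapses the expansion $b_{i(\cdot)}=\bigoplus_k c_{ik}\ot a_{k(\cdot)}$ to a single term for each row index $i$. Finally I would invoke full row rank once more to see that the resulting row-to-row correspondence is injective, hence a permutation, and read off the monomial matrix $P$ directly from $C$. The only genuinely delicate point is the diagonal identity for $M$; everything after it is routine.

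First I would substitute $A=D\ot B$ into $B=C\ot A$ to obtain $B=(C\ot D)\ot B=:M\ot B$, so that for every $i$ we have the $i$th row identity $b_{i(\cdot)}=\bigoplus_{l} m_{il}\ot b_{l(\cdot)}$. By Definition~\ref{rrank} no row of $B$ is identically $-\infty$ (otherwise the combination with all $\lambda_k=-\infty$ would realise it), so $b_{i(\cdot)}$ has a finite entry; comparing the $l=i$ term of the sum with the left-hand side in that coordinate gives $m_{ii}\le 0$. If $m_{ii}<0$ strictly, then in every coordinate where $b_{ij}$ is finite the maximum on the right is attained at some $l\ne i$, and in every coordinate where $b_{ij}=-\infty$ all terms equal $-\infty$; hence $b_{i(\cdot)}=\bigoplus_{l\ne i} m_{il}\ot b_{l(\cdot)}$, contradicting full row rank. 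Therefore $m_{ii}=0$ for all $i$.

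Next, writing $m_{ii}=\bigoplus_k (c_{ik}+d_{ki})=0$, I would fix an index $k=k(i)$ attaining this maximum, so that $c_{ik}$ and $d_{ki}$ are both finite and $c_{ik}+d_{ki}=0$. From $a_{k(\cdot)}=\bigoplus_l d_{kl}\ot b_{l(\cdot)}\ge d_{ki}\ot b_{i(\cdot)}$ we get $c_{ik}\ot a_{k(\cdot)}\ge (c_{ik}+d_{ki})\ot b_{i(\cdot)}=b_{i(\cdot)}$, while $c_{ik}\ot a_{k(\cdot)}\le b_{i(\cdot)}$ because it is one of the terms of $b_{i(\cdot)}=\bigoplus_k c_{ik}\ot a_{k(\cdot)}$. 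Hence $b_{i(\cdot)}=c_{i,k(i)}\ot a_{k(i),(\cdot)}$: every row of $B$ is a (finite) scalar multiple of a single row of $A$.

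It remains to check that $\pi:=k(\cdot)$ is injective. If $\pi(i)=\pi(i')=k$ with $i\neq i'$, then $b_{i(\cdot)}$ and $b_{i'(\cdot)}$ are finite scalar multiples of the same row $a_{k(\cdot)}$, which is not identically $-\infty$; thus $b_{i(\cdot)}=(c_{ik}-c_{i'k})\ot b_{i'(\cdot)}$, again contradicting the full row rank of $B$. So $\pi$ is a bijection of $\{1,\dots,n\}$, and the matrix $P$ defined by $p_{i,\pi(i)}=c_{i,\pi(i)}$ and $p_{ij}=-\infty$ for $j\neq\pi(i)$ is monomial with $\sigma(P)=\pi^{-1}$, and satisfies $P\ot A=B$. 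The main obstacle, as noted, is the clean proof that $M$ has zero diagonal; once that is in hand the collapse to one term and the injectivity argument are immediate from Definition~\ref{rrank}.
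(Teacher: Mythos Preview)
Your argument is correct and follows essentially the same line as the paper's own proof: form $M=C\ot D$, use full row rank of $B$ to force the $i$th diagonal entry of $M$ to be nonnegative (you go slightly further and show $m_{ii}=0$, but only $m_{ii}\ge 0$ is used), pick $k(i)$ realizing this, run the two-sided estimate to get $b_{i(\cdot)}=c_{i,k(i)}\ot a_{k(i)(\cdot)}$, and finish with injectivity of $k(\cdot)$ from full row rank. The only difference is that you spell out explicitly why the $p=i$ summand can be dropped and why $i\mapsto k(i)$ is injective, which the paper leaves to the reader.
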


\begin{proof}
Since $B=C\ot A$ and $A=D\ot B$ imply that $B=C\ot D\ot B$, we have
\begin{equation}\label{eq_2 of_it}
b_{i(\cdot)}=\bigoplus\limits_{p=1}^n\left[\left(\bigoplus\limits_{k=1}^nc_{ik}\ot d_{kp}\right)\ot b_{p(\cdot)}\right].
\end{equation}


If $\bigoplus\limits_{k=1}^nc_{ik}\ot d_{ki}<0$, then
the summand of $p=i$ can be omitted from the right-hand side of~(\ref{eq_2 of_it}). In this case,
the condition~(\ref{eq_1_of_it}) is satisfied, so the row rank of $B$ is not full. The contradiction
shows that indeed $\bigoplus_{k=1}^n\left(c_{ik}\ot d_{ki}\right)\geq0$.

Then for some $\varkappa=\varkappa(i)$, we have $c_{i\varkappa}\ot d_{\varkappa i}\geq0$.
From $B=C\ot A$ it follows that $b_{it}\geq c_{i\varkappa}\ot a_{\varkappa t}$ for every $t\in\{1,\ldots,n\}$.
On the other hand,
$$c_{i\varkappa}\ot a_{\varkappa t}=c_{i\varkappa}\ot\left(\bigoplus\limits_{p=1}^n d_{\varkappa p}\ot b_{pt}\right)
\geq c_{i\varkappa}\ot d_{\varkappa i}\ot b_{it}\geq b_{it},$$
so indeed $b_{it}=c_{i\varkappa}\ot a_{\varkappa t}$ for every $t\in\{1,\ldots,n\}$.
Thus every row of $B$ is some row of $A$ multiplied by a scalar.
Since the row rank of $B$ is full, the result follows.
\end{proof}

The following lemma deals with matrices whose row rank is not full.

\begin{lem}\label{lemma_2_of_it}
Let $\G$ be a subgroup of $\left(\Ro^{n\times n},\ot\right)$, $n\geq2$.
If the row rank of some matrix $A$ from $\G$ is not full, then $\G$ admits
a faithful representation with tropical $(n-1)$-by-$(n-1)$ matrices.
\end{lem}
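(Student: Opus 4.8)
The plan is to build an injective semigroup homomorphism $\Phi\colon\G\to\Ro^{(n-1)\times(n-1)}$ directly out of the relation witnessing that $A$ is not of full row rank. Let $E$ be the identity element of the group $\G$, so that $E\ot E=E$ and $E\ot B=B\ot E=B$ for every $B\in\G$, and let $A^{-1}\in\G$ be the group inverse of $A$, so that $A^{-1}\ot A=A\ot A^{-1}=E$. Since the row rank of $A$ is not full, there are an index $i$ and scalars $\lambda_k\in\Ro$ with $a_{i(\cdot)}=\bigoplus_{k\neq i}\lambda_k\ot a_{k(\cdot)}$. Put $T=\{1,\ldots,n\}\setminus\{i\}$, so that $A[T]$ is the $(n-1)\times n$ matrix obtained from $A$ by deleting its $i$th row.

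The only real idea is that $A$ factors through dimension $n-1$. Define $L\in\Ro^{n\times(n-1)}$, with rows indexed by $\{1,\ldots,n\}$ and columns by $T$, by $L_{jj}=0$ for $j\in T$, $L_{ik}=\lambda_k$ for $k\in T$, and all remaining entries equal to $-\infty$. Checking $L\ot A[T]$ row by row — the rows $j\in T$ being trivial and the $i$th row being exactly the displayed relation — gives
\[
L\ot A[T]=A .
\]
I would then set $\Phi(B)=A[T]\ot B\ot A^{-1}\ot L$ for $B\in\G$; this is an $(n-1)\times(n-1)$ matrix, its rows and columns both being indexed by $T$, which we identify once and for all with $\{1,\ldots,n-1\}$.

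What remains is to check that $\Phi$ is a faithful representation, and here everything follows from $L\ot A[T]=A$, $A^{-1}\ot A=E$ and $B\ot E=B$ together with associativity. For the homomorphism property,
\[
\Phi(B)\ot\Phi(C)=A[T]\ot B\ot A^{-1}\ot\bigl(L\ot A[T]\bigr)\ot C\ot A^{-1}\ot L=A[T]\ot B\ot C\ot A^{-1}\ot L=\Phi(B\ot C).
\]
For injectivity, the same identities recover $B$ from $\Phi(B)$:
\[
A^{-1}\ot L\ot\Phi(B)\ot A[T]=A^{-1}\ot\bigl(L\ot A[T]\bigr)\ot B\ot A^{-1}\ot\bigl(L\ot A[T]\bigr)=E\ot B\ot E=B .
\]
Hence $\Phi$ is an embedding and $\Phi(\G)$ is a subgroup of $\left(\Ro^{(n-1)\times(n-1)},\ot\right)$ isomorphic to $\G$.

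The argument is short, and the only step that demands an idea is the factorization: one must notice that $A$ failing to have full row rank is exactly the assertion $A=L\ot A[T]$ with $L$ of shape $n\times(n-1)$ and $A[T]$ of shape $(n-1)\times n$, and that pre- and post-composing an arbitrary element of $\G$ with $A[T]$ and $L$, with a copy of $A^{-1}\ot A=E$ inserted to reconcile shapes, spreads this single factorization into a group-wide dimension reduction. Everything after that is routine shape-checking plus the three one-line identities above; the mild bookkeeping nuisance is keeping track of the non-square factors and of the index set $T$.
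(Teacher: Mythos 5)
Your proof is correct and follows essentially the same route as the paper: your factorization $A=L\ot A[T]$ is exactly the paper's $A=P\ot\overline{A}$, and your sandwich map $\Phi(B)=A[T]\ot B\ot A^{-1}\ot L$ is the paper's $\varphi(G)=\overline{G}\ot P$ precomposed with the inner automorphism $B\mapsto A\ot B\ot A^{-1}$. Your explicit two-sided recovery of $B$ from $\Phi(B)$ is a slightly cleaner way to get injectivity than the paper's cancellation argument, but the underlying idea is identical.
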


\begin{proof}
By Definition~\ref{rrank}, some row of $A$ is a linear combination of other its rows.
So for some $i\in\{1,\ldots,n\}$ we have $A=P\ot\overline{A}$, where the matrix
$\overline{A}\in\Ro^{(n-1)\times n}$ is obtained
from $A$ by removing the $i$th row, and $P\in\Ro^{n\times(n-1)}$ is such that the matrix
$P[1,\ldots,i-1,i+1,\ldots,n]$ is the neutral element of $\left(\Ro^{(n-1)\times(n-1)},\ot\right)$.
Since $\G$ is a group, for every $G\in\G$ there exists $B\in\G$ such that $G=A\ot B$.

Thus we see that $G=P\ot\overline{G}$. The matrix $\overline{G}$ here is uniquely
determined, because $P[1,\ldots,i-1,i+1,\ldots,n]$ is the neutral element.
The map $\varphi$ sending $G\in\G$ to
$\overline{G}\ot P\in\Ro^{(n-1)\times(n-1)}$ is therefore well defined.

Note that for every $G,H\in\G$ it holds that
$$\varphi(G\ot H)=\varphi(P\ot \overline{G}\ot P\ot \overline{H})=\overline{G}\ot P\ot \overline{H}\ot P=\varphi(G)\ot \varphi(H),$$
so $\varphi$ is a homomorphism. Moreover, if $\varphi(G)=\varphi(H)$, then $\overline{G}\ot P=\overline{H}\ot P$,
so in this case $P\ot \overline{G}\ot P\ot \overline{G}=P\ot \overline{H}\ot P\ot \overline{G}$, or $G\ot G=H\ot G$.
Since $\G$ is a group, the condition $\varphi(G)=\varphi(H)$ therefore implies that $G=H$, proving that $\varphi$ is injective.
\end{proof}

Now we are ready to prove the one of our main results.

\begin{thr}\label{grtrmatr}
Every subgroup of the semigroup $\left(\Ro^{n\times n},\ot\right)$ admits
a faithful representation with tropical monomial $n$-by-$n$ matrices.
\end{thr}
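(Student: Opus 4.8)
The plan is to argue by induction on $n$. The case $n=1$ is immediate: a subgroup of $\left(\Ro,\ot\right)$ that contains the absorbing element $-\infty$ is forced to equal $\{-\infty\}$ and represents faithfully into the neutral element, while any other subgroup lies in $(\R,+)$, where each element is already a monomial $1$-by-$1$ matrix. For the inductive step, fix a subgroup $\G$ of $\left(\Ro^{n\times n},\ot\right)$ with $n\geq 2$, let $E$ denote its identity element, and split into two cases. If some matrix of $\G$ has non-full row rank, then Lemma~\ref{lemma_2_of_it} embeds $\G$ faithfully into $\left(\Ro^{(n-1)\times(n-1)},\ot\right)$; by the induction hypothesis the image embeds faithfully into monomial $(n-1)$-by-$(n-1)$ matrices, and composing with the map that adjoins a $1$-by-$1$ identity block---which is injective, multiplicative, and sends monomial matrices to monomial matrices---settles this case.

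So suppose every matrix of $\G$, in particular $E$ and, for each $G\in\G$, both $G$ and $G^{-1}$, has full row rank. Applying Lemma~\ref{lemma_1_of_it} with $(B,A,C,D)=(G,E,G,G^{-1})$---the hypotheses hold because $G=G\ot E$ and $E=G^{-1}\ot G$---produces a monomial matrix $P_G$ with $G=P_G\ot E$; applying it with $(B,A,C,D)=(E,G,G^{-1},G)$ produces a monomial matrix $Q$ with $E=Q\ot G$. The technical heart of the proof is the following cancellation fact: if $R$ is monomial, $M$ has full row rank and $R\ot M=M$, then $R$ is the neutral element. To see this, write $\sigma=\sigma(R)$, so that the $i$th row of $R\ot M$ equals $r_{i,\sigma^{-1}(i)}\ot m_{\sigma^{-1}(i)(\cdot)}$; if $\sigma\neq\mathrm{id}$ then for some $i$ the equality $R\ot M=M$ expresses $m_{i(\cdot)}$ as $\lambda\ot m_{\sigma^{-1}(i)(\cdot)}$ for some finite $\lambda$ and some $\sigma^{-1}(i)\neq i$, contradicting full row rank; and once $\sigma=\mathrm{id}$, the relations $m_{i(\cdot)}=r_{ii}\ot m_{i(\cdot)}$ together with the absence of an all-$(-\infty)$ row in a full-row-rank matrix force each $r_{ii}$ to be $0$.

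This fact lets me upgrade the existence of $P_G$ to uniqueness. From $E=Q\ot G=Q\ot P\ot E$ and $G=P\ot E=P\ot Q\ot G$, the cancellation fact (with $M=E$ and with $M=G$) shows that $Q\ot P$ and $P\ot Q$ are both the neutral element; applying it to any monomial $P'$ with $G=P'\ot E$ gives $Q\ot P'=$ neutral element as well, whence $P'=(P\ot Q)\ot P'=P\ot(Q\ot P')=P$. Hence $\varphi\colon G\mapsto P_G$ is a well-defined map from $\G$ to the monomial matrices. Because $E\ot P_H\ot E=E\ot H=H=P_H\ot E$, we get $G\ot H=P_G\ot E\ot P_H\ot E=(P_G\ot P_H)\ot E$ with $P_G\ot P_H$ monomial, so uniqueness yields $\varphi(G\ot H)=\varphi(G)\ot\varphi(H)$; and $\varphi(G)=\varphi(H)$ forces $G=P_G\ot E=P_H\ot E=H$. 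Thus $\varphi$ is the desired faithful representation by monomial matrices.

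I expect essentially all the difficulty to be concentrated in the cancellation fact and in using it to pass from the bare existence assertion of Lemma~\ref{lemma_1_of_it} to the uniqueness of $P_G$, since it is precisely this uniqueness that turns $G\mapsto P_G$ into a homomorphism. The inductive skeleton, the Lemma~\ref{lemma_2_of_it} reduction, the block-padding, and the verification that $\varphi$ respects products and is injective are all routine once full row rank of every matrix of $\G$ has been secured.
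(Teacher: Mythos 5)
Your proof is correct and follows essentially the same route as the paper: induction on $n$, Lemma~\ref{lemma_2_of_it} plus block-padding in the non-full-rank case, and Lemma~\ref{lemma_1_of_it} applied to the pair $(G,E)$ to produce a monomial $P_G$ with $G=P_G\ot E$ in the full-rank case. The only difference is that you supply an explicit cancellation argument for the uniqueness of $P_G$, which the paper merely asserts from full row rank; this is a welcome elaboration, not a change of method.
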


\begin{proof}
The case of $n=1$ is trivial, and we proceed by the induction on $n$.
Let $n\geq2$, $\G$ be a subgroup of $\left(\Ro^{n\times n},\ot\right)$,
$E$ be a neutral element of $\G$. The two cases are then possible.

1. Let $\G$ contain a matrix whose row rank is not full.
Lemma~\ref{lemma_2_of_it} shows that in this case $\G$
admits a faithful representation with tropical $(n-1)$-by-$(n-1)$
matrices. The inductive hypothesis then shows that $\G$ has
a faithful representation with tropical monomial $(n-1)$-by-$(n-1)$
matrices, so the result follows.

2. Now let the matrices from $\G$ be of full row rank.
In this case, from Lemma~\ref{lemma_1_of_it} it follows that
for every $G\in\G$ there exists a monomial matrix $\P_G\in\Ro^{n\times n}$
such that $G=\P_G\ot E$. Since the row rank of $G$ is full, we see that
the matrix $\P_G$ is uniquely determined. So we can define the map
$\psi$ sending $G\in\G$ to the monomial matrix $\P_G$. Clearly,
$\psi$ is injective. We also see that for every $G,H\in\G$ it holds that
$G\ot H=\P_G\ot E\ot H=\P_G\ot H=\P_G\ot\P_H\ot E,$
so $\psi$ is a homomorphism.
\end{proof}

Johnson and Kambites in~\cite[Section 4]{JK2} conjectured that every group admitting a faithful
representation by $n\times n$ tropical matrices has a torsion-free abelian subgroup of index at most $n!$.
Now we are ready to prove this conjecture.

\begin{thr}\label{answ}
Let a group $\G$ admit a faithful representation by $n\times n$ tropical matrices.
Then $\G$ has a torsion-free abelian subgroup of index at most $n!$.
\end{thr}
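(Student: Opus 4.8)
The plan is to combine Theorem~\ref{grtrmatr} with the observation that the monomial tropical $n$-by-$n$ matrices form, under $\ot$, a group isomorphic to the wreath product $\R\wr\S_n$, inside which the desired abelian subgroup is visible as the diagonal part. By Theorem~\ref{grtrmatr} we may assume that $\G$ is a group of monomial $n$-by-$n$ tropical matrices. For such a matrix $P$, write $\sigma(P)\in\S_n$ for the permutation occurring in the definition of a monomial matrix, and note that the entries $p_{\sigma(P)(j),j}$, $j=1,\dots,n$, lie in $\R$. A short computation of the tropical product shows that if $P$ and $Q$ are monomial, then in $P\ot Q$ the $(i,j)$-entry $\bigoplus_k p_{ik}\ot q_{kj}$ has at most one summand different from $-\infty$, namely the one with $k=\sigma(Q)(j)$, and this summand is indeed different from $-\infty$ exactly when $i=\sigma(P)\bigl(\sigma(Q)(j)\bigr)$. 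Hence $P\ot Q$ is monomial with $\sigma(P\ot Q)=\sigma(P)\sigma(Q)$, so $P\mapsto\sigma(P)$ is a group homomorphism $\psi\colon\G\to\S_n$.

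First I would let $N=\ker\psi$, the set of diagonal matrices in $\G$. Then $N$ is normal in $\G$ and $\G/N$ is isomorphic to the subgroup $\psi(\G)\le\S_n$, so $[\G:N]=\abs{\psi(\G)}\le\abs{\S_n}=n!$. Next I would check that $N$ is torsion-free abelian. A diagonal monomial matrix is determined by the tuple of its diagonal entries, which lies in $\R^n$, and the product formula above restricts on diagonal matrices to coordinatewise addition; thus $N$ embeds as a subgroup of the additive group $(\R^n,+)$. Since $(\R^n,+)$ is abelian and torsion-free (for $k\neq0$ the relation $k\cdot v=0$ forces $v=0$), so is its subgroup $N$. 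This exhibits a torsion-free abelian subgroup $N\le\G$ of index at most $n!$, as required.

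I do not expect a serious obstacle here, since the substance of the matter is already in Theorem~\ref{grtrmatr}; the only point demanding care is the index bookkeeping in the multiplication rule for monomial matrices, where one must correctly identify the unique candidate nonzero entry produced in each column of $P\ot Q$. Getting that right is exactly what simultaneously establishes that $\sigma$ is multiplicative and identifies the full group of monomial matrices with $\R\wr\S_n$, after which the theorem follows by a direct application of the first isomorphism theorem together with the absence of torsion in $(\R,+)$.
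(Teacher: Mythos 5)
Your proof is correct and follows essentially the same route as the paper: reduce to monomial matrices via Theorem~\ref{grtrmatr}, take the diagonal matrices as the normal torsion-free abelian subgroup, and bound the index by $n!$ via the permutation attached to each monomial matrix. You simply spell out the verification that $P\mapsto\sigma(P)$ is a homomorphism and that the diagonal part embeds in $(\R^n,+)$, details the paper leaves implicit.
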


\begin{proof}
By Theorem~\ref{grtrmatr}, we assume without a loss of generality that $\G$
consists of tropical monomial $n$-by-$n$ matrices. Consider the subgroup $D$
of all diagonal matrices from $\G$. Clearly, $D$ is normal in $\G$,
abelian and torsion-free. It remains to note that matrices $A,B\in\G$ belong
to the same coset of $D$ in $\G$ if and only if $\sigma(A)=\sigma(B)$.
\end{proof}

D'Alessandro and Pasku have shown that every periodic finitely generated subgroup
of $\left(\Ro^{n\times n},\ot\right)$ is finite, see~\cite[Proposition 5]{dAP}. Theorem~\ref{grtrmatr}
allows us to derive a more precise characterization. Recall that a group $H$ is
\textit{periodic} if each element of $H$ has finite order.

\begin{cor}\label{answ2}
The order of any periodic subgroup of the semigroup $\left(\Ro^{n\times n},\ot\right)$ is at most $n!$.
\end{cor}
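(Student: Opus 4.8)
The plan is to use Theorem~\ref{grtrmatr} to reduce the problem to periodic subgroups of the monomial matrices, and then exploit the structure of monomial matrices directly. First I would take a periodic subgroup $\G$ of $\left(\Ro^{n\times n},\ot\right)$. Since being periodic is an abstract group-theoretic property, Theorem~\ref{grtrmatr} lets us assume without loss of generality that $\G$ consists of tropical monomial $n$-by-$n$ matrices, the isomorphic copy of $\G$ being again periodic.

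Next I would invoke the homomorphism $\sigma\colon\G\to\S_n$ sending a monomial matrix $P$ to its underlying permutation $\sigma(P)$; this is a group homomorphism because the product of two monomial matrices is monomial with permutation the composite. The kernel of $\sigma$ is the subgroup $D$ of diagonal matrices in $\G$. As observed in the proof of Theorem~\ref{answ}, $D$ is torsion-free: a diagonal matrix with diagonal entries $\lambda_1,\ldots,\lambda_n\in\R$ has $k$th tropical power equal to the diagonal matrix with entries $k\lambda_1,\ldots,k\lambda_n$, which is the neutral element only when all $\lambda_i=0$. Hence in a periodic group $D$ must be trivial, so $\sigma$ is injective and $\G$ embeds into $\S_n$, giving $\abs{\G}\leq n!$.

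The only subtlety worth spelling out is why every element of the monomial copy of $\G$ lies in $\R^{n\times n}$, i.e.\ has no $-\infty$ entries among its nonzero positions — but this is built into the definition of a monomial matrix, where $p_{ij}\neq-\infty$ exactly when $i=\sigma(j)$, so the diagonal-power computation above makes sense with ordinary real arithmetic. I expect no real obstacle here: the substantive work has already been done in Theorem~\ref{grtrmatr}, and the corollary is essentially the observation that a torsion-free abelian group which is also periodic is trivial, applied to the normal subgroup $D$ of the quotient description from Theorem~\ref{answ}. The main thing to be careful about is simply phrasing the reduction so that periodicity is transported correctly across the faithful representation.
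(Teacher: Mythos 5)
Your proof is correct and follows the same route as the paper: the paper simply cites Theorem~\ref{answ} and observes that a torsion-free subgroup of a periodic group is trivial, so the subgroup $D$ of index at most $n!$ collapses and $\abs{\G}\leq n!$. You have merely unwound the proof of Theorem~\ref{answ} inline (the reduction to monomial matrices, the homomorphism to $\S_n$, and the torsion-freeness of the diagonal kernel) rather than citing it, which is fine but not necessary.
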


\begin{proof}
By definition, any torsion-free subgroup of a periodic group is trivial. So the result
follows from Theorem~\ref{answ}.
\end{proof}

Finally, we note that the group of all tropical monomial $n$-by-$n$ matrices is isomorphic
to the wreath product $\R\wr\S_n$. This gives the following group-theoretic description of
the subgroups of $\left(\Ro^{n\times n},\ot\right)$.

\begin{thr}\label{grtrmatr2}
A group $\G$ admits a faithful representation with tropical $n$-by-$n$ matrices if
and only if $\G$ is isomorphic to a subgroup of the wreath product $\R\wr\S_n$.
\end{thr}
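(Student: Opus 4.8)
The plan is to prove the two directions separately, using Theorem~\ref{grtrmatr} for the harder ``only if'' direction.

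First I would establish the forward direction. Suppose $\G$ admits a faithful representation with tropical $n$-by-$n$ matrices. By Theorem~\ref{grtrmatr}, $\G$ is isomorphic to a group of tropical monomial $n$-by-$n$ matrices, so it suffices to show that the group $M_n$ of \emph{all} tropical monomial $n$-by-$n$ matrices is isomorphic to $\R\wr\S_n$; then any subgroup of $M_n$ is a subgroup of $\R\wr\S_n$. Recall $\R\wr\S_n$ is the semidirect product $\R^n\rtimes\S_n$, where $\S_n$ acts on $\R^n$ by permuting coordinates, with multiplication $(\mathbf{u},\sigma)(\mathbf{v},\tau)=(\mathbf{u}+\sigma\cdot\mathbf{v},\sigma\tau)$. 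I would define a map $\Phi\colon M_n\to\R\wr\S_n$ by sending a monomial matrix $P$ with permutation $\sigma(P)$ to the pair $(\mathbf{w},\sigma(P))$, where $w_j=p_{\sigma(P)(j),\,j}$ records the (finite) nonzero entry in column $j$. This is a bijection since a monomial matrix is determined by its permutation together with its list of finite entries. The only real computation is checking that $\Phi$ is a homomorphism: if $P$ has permutation $\sigma$ and entries $\mathbf{u}$, and $Q$ has permutation $\tau$ and entries $\mathbf{v}$, then $(P\ot Q)_{ij}=\bigoplus_k p_{ik}\ot q_{kj}$, which is finite exactly when $i=\sigma(k)$ and $k=\tau(j)$ for the unique such $k=\tau(j)$, giving permutation $\sigma\tau$ and column-$j$ entry $u_{\tau(j)}+v_j$; this matches the wreath product multiplication once one is careful about whether the $\S_n$-action uses $\sigma$ or $\sigma^{-1}$, which only affects the choice of convention in defining $\Phi$ (one may instead record $w_j=p_{\sigma(P)(j),j}$ or use $\sigma^{-1}$ throughout).

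Next I would handle the converse. Suppose $\G$ is isomorphic to a subgroup $\G'$ of $\R\wr\S_n$. By the isomorphism $\Phi^{-1}$ just constructed, $\G'$ corresponds to a subgroup of $M_n$, which is a subgroup of $\left(\Ro^{n\times n},\ot\right)$, and the inclusion $M_n\hookrightarrow\Ro^{n\times n}$ is already a faithful representation. Composing the isomorphisms, $\G$ admits a faithful representation with tropical $n$-by-$n$ matrices. This direction is essentially immediate once $\Phi$ is in hand.

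The main obstacle is purely bookkeeping: getting the semidirect-product conventions to line up, i.e.\ deciding whether to index the coordinate vector of a monomial matrix by its columns or rows and whether the relevant permutation in the wreath product is $\sigma(P)$ or $\sigma(P)^{-1}$, so that matrix multiplication translates exactly to the wreath product operation $(\mathbf{u},\sigma)(\mathbf{v},\tau)=(\mathbf{u}+\sigma\cdot\mathbf{v},\sigma\tau)$. There is no conceptual difficulty beyond this, since Theorem~\ref{grtrmatr} already supplies the reduction to monomial matrices; the remaining content is the single natural-group-isomorphism computation $M_n\cong\R\wr\S_n$.
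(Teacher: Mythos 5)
Your proposal is correct and follows essentially the same route as the paper: the paper's proof of this theorem is simply ``Follows from Theorem~\ref{grtrmatr}'', relying on the immediately preceding remark that the group of all tropical monomial $n$-by-$n$ matrices is isomorphic to $\R\wr\S_n$, which is exactly the isomorphism $\Phi$ you construct (the paper just leaves that verification implicit, while you carry it out).
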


\begin{proof}
Follows from Theorem~\ref{grtrmatr}.
\end{proof}

I would like to thank my scientific advisor Professor Alexander E. Guterman for constant attention to my work.

\end{document}